\newtheorem{proposition}[equation]{Proposition}
\newtheorem{theorem}[equation]{Theorem}
\newtheorem{claim}[equation]{Claim}
\newtheorem{question}[equation]{Question}
\newtheorem{lemma}[equation]{Lemma}
\newtheorem{corollary}[equation]{Corollary}
\theoremstyle{remark}
\theoremstyle{definition}
\newtheorem{definition}[equation]{Definition}
\newtheorem{notation}[equation]{Notation}
\newtheorem{convention}[equation]{Convention}
\theoremstyle{remark}
\newtheorem{remark}[equation]{Remark}
\newtheorem{example}[equation]{Example}
\numberwithin{equation}{section}
\newcommand{\calD}{{\mathcal D}}
\newcommand{\calP}{{\mathcal P}}
\newcommand{\calM}{{\mathcal M}}
\newcommand{\bP}{{\mathbb P}}
\newcommand{\bZ}{{\mathbb Z}}
\newcommand{\bQ}{{\mathbb Q}}
\newcommand{\calC}{{\mathcal C}}
\DeclareMathOperator{\tr}{tr}
\DeclareMathOperator{\rank}{rank}
\DeclareMathOperator{\sign}{sign}
\DeclareMathOperator{\prim}{prim}
\begin{document}
 \title[Maximally Algebraic Cubics]{Maximally Algebraic potentially irrational Cubic Fourfolds}
\author[R. Laza]{Radu Laza}
\address{Stony Brook University,  Stony Brook, NY 11794, USA}
\email{radu.laza@stonybrook.edu}

\date{\today}
\thanks{Research of the author is supported in part by NSF grants DMS-1254812 and DMS-1361143}

\begin{abstract}
A well known conjecture asserts that a cubic fourfold $X$ whose transcendental cohomology $T_X$ can not be realized  as the transcendental cohomology of a $K3$ surface is irrational. Since the geometry of cubic fourfolds is intricately related to  the existence of  algebraic $2$-cycles on them, it 
  is natural to ask for the most algebraic cubic fourfolds $X$ to which this conjecture is still applicable. In this paper, we show that for an appropriate ``algebraicity index'' $\kappa_X\in \bQ_+$, there exists a unique  class of cubics maximizing $\kappa_X$, not having an associated $K3$ surface; namely, the cubic fourfolds with an Eckardt point (previously investigated in  \cite{LPZ}). Arguably, they are the most algebraic potentially irrational cubic fourfolds, and thus a good testing ground for the Harris, Hassett, Kuznetsov conjectures.  
\end{abstract}
\maketitle

\bibliographystyle{amsalpha}

\section{Introduction}
A celebrated result of Clemens and Griffiths \cite{CG} says that a smooth cubic threefold $Y$ is irrational. This follows by showing that while the Hodge structure on $H^3(Y)$ looks like (the Tate twist of) the Hodge structure of a curve, it is in fact not coming (over $\bZ$) from a curve. Similarly, for cubic fourfolds, one sees that the Hodge structure on $H^4(X)_{\prim}$ is of $K3$ type. 
In analogy with the Clemens--Griffiths result, it is natural to conjecture that if the transcendental cohomology of $X$ is not actually coming from a $K3$ surface $S$ (i.e. $H^4(X)_{\tr}\cong H^2(S)(-1)_{\tr}$), then $X$ is not rational. This was proposed by Harris, and some evidence was given by Hassett \cite{hassettT,hassettrational}: {\it all known rational cubic fourfolds have transcendental cohomology (over $\bZ$) induced from a $K3$ surface} (see also \cite{AT} for a discussion of the related Kuznetsov conjecture). However, we emphasize that {\bf no} example of irrational cubic fourfold (or for that matter any irrational cubic of dimension $> 3$)  is currently known. There are numerous papers on the subject of cubics and rationality, but the focus so far seems to  have been on (Hodge) general cubics or almost general cubics (e.g. cubics containing a plane).  In this paper, we turn the question around and ask: 
\begin{question}\label{questionmaxalg} Assuming that there exist irrational cubic fourfolds, which are the maximally algebraic cubic fourfolds $X$ for which the rationality fails?
\end{question} 

For a cubic $X$,  its algebraicity is naturally measured by the rank of the group of primitive algebraic cycles
\begin{equation}\label{defrho}
\rho_X:=\rank \left(H^{2,2}(X)\cap H^{4}(X,\bZ)_{\prim}\right)\in \{0,\dots, 20\}.
\end{equation}
For a general cubic fourfold $X$, $\rho_X=0$. In general,  $\rho_X=k$ is a codimension $k$ condition on moduli. More precisely, due to the theory of periods  for cubic fourfolds (\cite{voisin}, \cite{hassettspecial}, \cite{gitcubic,cubic4fold}, \cite{lcubic}), for $k\ge 1$, the locus of cubics with $\rho_X\ge k$ is a countable union of  codimension $k$ closed algebraic subvarieties in the moduli space $\calM$ of cubic fourfolds. These special loci can be indexed (compare \cite{hassettspecial}) by a second positive integer 
\begin{equation}\label{defd}
d_X:=\det \left(H^{2,2}(X)\cap H^{4}(X,\bZ)\right),
\end{equation}
(N.B. the associated locus might be  empty or reducible for some values of the pair $(\rho_X,d_X)$). It is natural to understand {\it maximally algebraic} cubic as both maximizing $\rho_X$ and minimizing $d_X$. We propose to combine $(\rho_X,d_X)$ in a single  value {\it algebraicity index} for $X$, defined by
\begin{equation}\label{defkappa}
\kappa_X=\frac{2^{\rho_X}}{d_X}\in \bQ_{+}.
\end{equation}
Thus, a more precise version of Question \eqref{questionmaxalg} is to ask to maximize $\kappa_X$ subject to the irrationality constraint. 

\begin{remark} Vinberg \cite{vinberg}  studied the two most algebraic $K3$ surfaces. Similar to the $K3$ case, the two most algebraic cubic fourfolds (i.e. those giving maximal values for $\kappa_X$) are the (rigid) cubics with transcendental lattices: $A_2(-1)=\left(\begin{matrix} -2&1\\1&-2\end{matrix}\right)$ and $(A_1(-1))^2=\left(\begin{matrix} -2&0\\0&-2\end{matrix}\right)$ respectively (same, up to a twist, as for $K3$s). For these two examples, we get  $\kappa_X=\frac{2^{21}}{3}$ and $2^{19}$ respectively. In contrast, as we will discuss below, the cubics that are conjectured to be irrational have $\kappa_X\le 1$. 
\end{remark}

No example of irrational cubic fourfold $X$ is known; we have nothing new to say in this direction. Thus, for all we know, it is possible that Question \eqref{questionmaxalg} is void. Nonetheless, we use it as motivation to answering the following more tangible question:
\begin{question}\label{question2}
Which  cubic fourfolds $X$, without associated $K3$ surfaces (in the sense of \cite{hassettT}),  maximize the algebraicity index $\kappa_X$? \end{question}

As discussed, the Harris--Hassett conjecture predicts that a cubic $X$ without an associated $K3$ surface is irrational.  Thus, the two questions  are related modulo a hard open  conjecture. Without touching on this conjecture, the goal of  our paper is to bring about a different perspective on it.  Namely, on a general cubic fourfold, 
 it is difficult to do geometry as there are no special algebraic cycles to build on. As we will see below, our answer $X$ to Question \ref{question2} is at the opposite end: e.g. $X$ contains $27$ planes. This is surprising, as  a cubic fourfold $X$ containing two disjoint planes not only has an associated $K3$, but it is in fact rational. 

\begin{definition}\label{defirrational}Let $X$ be a smooth cubic fourfold. Let $N_X:=H^{2,2}(X)\cap H^{4}(X,\bZ)$ be the lattice of algebraic cycles, and $T_X=(N_X)^\perp_{H^4(X,\bZ)}$ be the transcendental lattice.
We say that $X$ is {\it potentially irrational} if the transcendental lattice $T_X$ does not admit a primitive embedding into the (Tate twisted) $K3$ lattice $(E_8)^2\oplus U^3$. (In the literature,   {\it potentially irrational} means $X$ {\it does not have an associated $K3$ surface}.)
\end{definition}

Our main result is a full answer to Question \ref{question2}.
\begin{theorem}\label{mainthm}
Let $X$ be a cubic fourfold. 
\begin{itemize}
\item[(0)] If $X$ is potentially irrational, then $\kappa_X\le 1$.
\item[(1)] A general cubic fourfold $X$ containing an Eckardt point is potentially irrational with $\kappa_X=1$.
\item[(2)] Conversely, any  potentially irrational $X$ with $\kappa_X=1$  is a cubic fourfold with an Eckardt point. 
\end{itemize}
\end{theorem}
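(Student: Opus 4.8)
The plan is to run all three parts through one lattice-theoretic observation. Since $H^4(X,\bZ)$ is unimodular and $N_X=T_X^{\perp}$, the lattices $N_X$ and $T_X$ are primitive orthogonal complements; hence their discriminant groups are (anti-)isomorphic and $d_X=|{\det N_X}|=|{\det T_X}|=|A_{T_X}|$, while $T_X$ is an even lattice of rank $22-\rho_X$ and signature $(20-\rho_X,2)$, so $\sign T_X=18-\rho_X$. By Nikulin's criterion a primitive embedding of $T_X$ into the unimodular Tate-twisted K3 lattice $(E_8)^2\oplus U^3$ exists if and only if there is an even lattice $K$ with $\sign K=(\rho_X-1,1)$ and discriminant form $q_K\cong -q_{T_X}$; thus $X$ is \emph{potentially irrational} exactly when no such $K$ exists. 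A routine congruence shows the constraint $\sign K\equiv \sign q_K\pmod 8$ is automatic (both sides are $\equiv\rho_X-2$), so the only obstructions come from the discriminant form itself and from Nikulin's existence inequalities. For part (0): if $\kappa_X>1$ then $d_X<2^{\rho_X}$, so $\ell(A_{T_X})\le \log_2 d_X<\rho_X=\rank K$; the desired $K$ is then indefinite of rank strictly larger than $\ell(A_{T_X})$, so Nikulin's theorem (non-boundary case) produces it, $T_X$ embeds, and $X$ has an associated K3. The cases $\rho_X\le 1$ are vacuous (no even lattice of the required signature and determinant exists), so by contraposition potential irrationality forces $\kappa_X\le 1$.

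For part (2) I would first extract the discrete data. Now $\kappa_X=1$ gives $d_X=2^{\rho_X}$, and the argument of part (0) becomes an equality: were $\ell(A_{T_X})<\rho_X$ the complement $K$ would exist, so $\ell(A_{T_X})=\rho_X$ and $A_{T_X}\cong(\bZ/2)^{\rho_X}$ is \emph{2-elementary}, with invariants $(\rank,a,\delta)=(22-\rho_X,\rho_X,\delta)$. If $\delta=1$ then $K=\langle 2\rangle^{\rho_X-1}\oplus\langle -2\rangle$ has discriminant form $-q_{T_X}$ (same $a$, same $\delta$, matching signature mod $8$, and $2$-elementary forms are determined by these data), again giving an embedding; hence $\delta=0$. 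Since $a=\rho_X\le \rank T_X=22-\rho_X$ we get $\rho_X\le 11$, and $\delta=0$ forces $\sign T_X=18-\rho_X\equiv 0\pmod 4$, i.e. $\rho_X\equiv 2\pmod 4$, so $\rho_X\in\{2,6,10\}$. The crux is the lemma that \emph{an even lattice with $\delta=0$ and $\ell(A_K)=\rank K$ lies in the genus of $U(2)^{\rank K/2}$, hence has balanced signature $(\rank K/2,\rank K/2)$}. Granting it, a complement $K$ of signature $(\rho_X-1,1)$ with $a=\rank K=\rho_X$ and $\delta=0$ exists only for $\rho_X=2$ (then $K=U(2)$); so the case $\rho_X=2$ is not potentially irrational and is discarded.

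It remains to eliminate $\rho_X=10$ and identify the $\rho_X=6$ case geometrically. For $X$ an actual smooth cubic, $T_X$ embeds primitively in $H^4(X,\bZ)_{\prim}\cong (E_8)^2\oplus U^2\oplus A_2$ (discriminant group $\bZ/3$) with positive-definite complement $N_X^{\prim}$ of rank $\rho_X$. A glue count for $H=H^4(X,\bZ)_{\prim}/(T_X\oplus N_X^{\prim})$, which injects into both $A_{T_X}=(\bZ/2)^{\rho_X}$ and $A_{N_X^{\prim}}$ and satisfies $2^{\rho_X}|A_{N_X^{\prim}}|=3|H|^2$, shows for $\rho_X=10$ that $|H|=2^{10}$ is forced, making $N_X^{\prim}$ positive-definite of rank $10$ whose $2$-elementary part has full length $10$ and $\delta=0$ — impossible by the lemma (balanced $\Rightarrow$ indefinite). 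Hence $\rho_X=6$, and $T_X$ is the even $2$-elementary lattice of signature $(14,2)$, $a=6$, $\delta=0$ (discriminant form $v_1\oplus u_1^{\oplus 2}$), which is unique in its genus because it is indefinite with $\ell(A_{T_X})+2\le \rank T_X$; thus $N_X$ is determined up to isometry. The isometry of $H^4(X,\bZ)$ that is $+1$ on $N_X$ and $-1$ on $T_X$ preserves the Hodge structure (it fixes the line $H^{3,1}$), so by the Torelli theorem for cubic fourfolds it is induced by an involution $\iota\in\Aut(X)$; $\iota$ is anti-symplectic with six-dimensional invariant primitive lattice, and matching this with the classification in \cite{LPZ} identifies $\iota$ as the Eckardt involution, whose fixed locus is a cubic threefold together with the Eckardt point. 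This proves (2). For the converse direction (1), for a general cubic with Eckardt point $p$ the involution is $x_0\mapsto -x_0$ with $\Fix(\iota)=\{p\}\sqcup Y$ for a cubic threefold $Y$; the topological Lefschetz fixed-point formula gives $\tr(\iota\mid H^4(X))=\chi(\Fix)=\chi(Y)+1=-5$, whence $\rank\Lambda^-=16$, $\rank\Lambda^+=6$, so $\rho_X=6$ and $T_X=\Lambda^-$ is $2$-elementary with $d_X=|A_{T_X}|=2^6$ and $\kappa_X=1$; the lemma again shows $T_X$ does not embed into $(E_8)^2\oplus U^3$, i.e. $X$ is potentially irrational.

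The main obstacle I anticipate is threefold and essentially local at the prime $2$: proving the balanced-signature lemma for $\delta=0$ lattices of full discriminant length (this is what simultaneously kills $\rho_X=2$ by embedding and $\rho_X=10$ by non-realizability), pinning down the exact invariants $(a,\delta)=(6,0)$ of $T_X$ for the Eckardt family rather than merely its rank, and — most delicately — promoting the abstract lattice involution to the geometric Eckardt involution. The last step needs the global Torelli theorem together with the description of the period image $\calM=\calD\setminus(\calC_2\cup\calC_6)$ to guarantee that the period point is that of a \emph{smooth} cubic, and the \cite{LPZ} analysis to certify that the induced automorphism is genuinely the Eckardt involution and not some other anti-symplectic involution with the same invariant rank.
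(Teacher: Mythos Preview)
Your overall architecture matches the paper's: reduce (0) to Nikulin's existence criterion, force $T_X$ to be $2$-elementary when $\kappa_X=1$, then run through Nikulin's classification of $2$-elementary lattices to isolate the exceptional case, and finally invoke \cite{LPZ}. The problem is your ``balanced-signature lemma.'' It is false as stated: $E_8(2)$ is even, $2$-elementary with $\delta=0$ and $\ell(A_{E_8(2)})=8=\rank E_8(2)$, yet has signature $(8,0)$, not $(4,4)$. The correct constraint is Nikulin's condition (7): for a $2$-elementary lattice with $\delta=0$ and $l=t_++t_-$, one has $t_+-t_-\equiv 0\pmod 8$, not $t_+=t_-$.

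This error propagates. With the correct $\pmod 8$ condition, the would-be K3 complement $K$ of signature $(\rho_X-1,1)$, $l=\rho_X$, $\delta=0$ exists precisely when $\rho_X-2\equiv 0\pmod 8$, i.e.\ for $\rho_X\in\{2,10\}$ in your range --- and for $\rho_X=10$ one can take $K=U(2)\oplus E_8(2)$. So $\rho_X=10$ is eliminated exactly like $\rho_X=2$: $T_X$ \emph{does} embed in the K3 lattice and $X$ is not potentially irrational. Your detour through a glue count in the cubic primitive lattice to rule out $\rho_X=10$ is therefore unnecessary, and in any case rests on the same false lemma (you invoke ``balanced $\Rightarrow$ indefinite'' for the putative $N_X^{\prim}$). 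Once you replace your lemma by condition (7), the trichotomy $\rho_X\in\{2,6,10\}$ collapses immediately to $\rho_X=6$, and the remainder of your argument (uniqueness of $T_X$ in its genus, Torelli, identification via \cite{LPZ}) goes through as in the paper. A minor slip: in your Lefschetz computation you wrote $\tr(\iota\mid H^4(X))=\chi(\Fix)=-5$, but the fixed-point formula gives $\chi(\Fix)=\sum_i(-1)^i\tr(\iota\mid H^i)$, and the contributions from $H^0,H^2,H^6,H^8$ add $4$; the correct trace on $H^4$ is $-9$, which is what actually yields $\rank\Lambda^-=16$.
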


The cubics occurring in the theorem above were investigated in our paper \cite{LPZ} with G. Pearlstein and Z. Zhang. They arise in connection with our search for pseudo-cubics (i.e. Fano fourfolds with middle cohomology  of $K3$ type). As reviewed in Section \ref{sectlpz}, there are multiple geometric characterization of such cubics. The most relevant fact here is that they  admit an Eckardt point $p\in X$. This implies that  $X$ contains $27$ planes passing through $p$. It then follows that the lattice of primitive algebraic cycles is isometric to $E_6(2)$. In \cite{LPZ} we studied the moduli space of cubic fourfolds with an Eckardt point by viewing them as $E_6(2)$-marked cubics (in analogy to the $M$-polarized $K3$s of 
Dolgachev).

Theorem \ref{mainthm} (see \ref{pfitem1}, \ref{pfitem2}, and \ref{pfitem3} for proofs of items (1), (2), and (3) respectively) follows from the geometric considerations of \cite{LPZ}, and Nikulin's theory of lattices (\cite{nikulin,nikulinaut}). We have been inspired by the papers of Vinberg \cite{vinberg} and D. Morrison \cite{morrison} which touch on similar questions for $K3$s. An expert look at the lattices involved in our paper (i.e. $E_6(2)$ and $(D_4)^3\oplus U^2$) will bring into focus another  special features of the cubics $X$ studied in \cite{LPZ}. Namely,   their Fano variety $F(X)$ of lines admits an exotic anti-symplectic involution, i.e. an involution which is not induced (via deformations) from a $K3$ surface. In contrast, for symplectic involutions on hyper-K\"ahler fourfolds of $K3^{[2]}$ type, Mongardi \cite{mongardi} proved that they are all induced from $K3$ surfaces (N.B. this was generalized recently to the $K3^{[n]}$ type by Kamenova and Mongardi). For higher orders, Mongardi \cite{mongardithesis} and others produced examples of exotic symplectic automorphisms on hyper-K\"ahler fourfolds of $K3^{[2]}$ type (via constructions involving cubics). \begin{convention}\label{convention} The root lattices $ADE$ are positive definite. For a lattice $L$,  $L(a)$ denotes the lattice scaled by $a$. Depending on the context, the ``$K3$ lattice'' refers either to $(E_8(-1))^2\oplus U^3$ or $(E_8)^2\oplus U^3$.  
\end{convention}

\subsection*{Acknowledgement} We have been aware of the \cite{LPZ} cubics since the author's thesis in connection to the deformations of the $O_{16}$ singularity (the cone over a cubic surface). We thank R. Friedman and I. Dolgachev for many stimulating discussions on the subject. Many of the ideas in this paper crystallized  as consequence of discussions with G. Pearlstein and Z. Zhang around our joint work \cite{LPZ}. Finally, this paper was prompted by a brief chance discussion with L. Kamenova and A. Kumar on a related topic.

\section{General considerations on potentially irrational cubic fourfolds}
As a consequence of Torelli theorems, Question \ref{question2} is equivalent to the following lattice theoretic question. 

\begin{question}\label{questionembed}
Let $T$ be an even lattice of signature $(n,2)$ admitting a primitive embedding into the lattice $A_2\oplus (E_8)^2\oplus U^2$. When does $T$ admit a primitive embedding into the $K3$ lattice $E_8^2\oplus U^3$? 
\end{question}

\begin{remark}
A similar question was asked by D. Morrison \cite{morrison}: {\it Which $K3$ surfaces are Kummer?} Equivalently, {\it Let $T$ be a sublattice of the $K3$ lattice, when does $T$ embed into the Kummer lattice $U^3$}?
\end{remark}

We fix the following assumptions and notations consistent with the Introduction (except for dropping the index $X$, as the discussion here  is a purely lattice theoretic). 
\begin{notation}\label{bignotation}
Let $T$ be an even lattice of signature $(n,2)$ with $n\le 20$. Assume given a fixed primitive embedding 
$$T\hookrightarrow I_{21,2}$$
such that $N:=T^\perp$ contains a element $h$ with $h^2=3$ and such that $\langle h\rangle^\perp_{I_{21,2}}$ is an even lattice (we say $h$ is of even type). The choice of $h$ is unique up to the action of $O(I_{21,2})$, and assumed fixed throughout. An easy application of \cite{nikulin} shows that $\Lambda_0:=\langle h\rangle^\perp_{I_{21,2}}\cong A_2\oplus (E_8)^2\oplus U^2$. We let 
$$N_0:=\langle h\rangle^\perp_{N}.$$
The lattices $T$ and $N_0$ are mutually orthogonal lattices in $\Lambda_0$. Frequently, it is more convenient to consider $T$ as a sublattice of  $\Lambda_0$, and define $N_0$ as its orthogonal complement with respect to this embedding.  The extension to an embedding $T$ into $I_{21,2}$ (and similarly of $N_0$ to $N$) is unique. As before, we let 
\begin{eqnarray*}
\rho&:=&\rank N_0(=22-n),\\
d&:=&\det (T).
\end{eqnarray*}
(and $\kappa=\frac{2\rho}{d}$). For a lattice $L$, we denote by $A_L=L^*/L$ the discriminant group. If $L$ is an even lattice, $A_L$ comes endowed with a finite quadratic form $q_L:A_L\to \bQ/2\bZ$. The key invariant relevant in the discussion below is the discriminant group $A_T\cong A_N$ and the number of its invariant factors: 
\begin{equation}\label{defell}
\ell=l(A_T).
\end{equation}
\end{notation}

Nikulin's theory \cite{nikulin} provides a complete answer to embedding questions such as those raised in Question \ref{questionembed}. A key aspect of  Nikulin's results is that,  in most  cases, deciding that a lattice $L$ admits an embedding into some other lattice  (typically unimodular) reduces to comparing two integers. Specifically, in our situation, most of the cases of Question \ref{questionembed} can be decided by comparing the invariants $\ell$ and $\rho$.

\begin{proposition}\label{propcond}
With notation as above.  
\begin{itemize}
\item[i)] A necessary condition that $T$ embeds into the $K3$ lattice is $\ell\le \rho$;
\item[ii)] Conversely, if $\ell \le \rho-1$,  there exists a primitive embedding of $T$ into the $K3$ lattice.  Furthermore, if $\ell\le \rho-2$, the embedding is unique.
\end{itemize}
\end{proposition}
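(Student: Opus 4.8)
The plan is to pass to the orthogonal complement and feed the resulting lattice into Nikulin's existence and uniqueness theorems \cite{nikulin}. Write $L=(E_8)^2\oplus U^3$ for the even unimodular $K3$ lattice, of signature $(19,3)$ and $\rank L=22=\rank \Lambda_0$. Suppose first that $T$ embeds primitively in $L$ with complement $K=T^\perp_L$. Because $L$ is unimodular, $A_K\cong A_T$ and $q_K\cong -q_T$; moreover $\rank K=22-\rank T=\rank N_0=\rho$ and $\sign K=(19-n,1)$. Part (i) is then immediate: $\ell=l(A_T)=l(A_K)\le \rank K=\rho$, since the discriminant group of a rank-$r$ lattice is generated by at most $r$ elements. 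I would also record here, for later use, that the signature congruence holds automatically, namely $\sign K=18-n\equiv 2-n\equiv \sign(-q_T)\pmod 8$ (the outer terms differ by $16$).

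For the existence half of (ii), I would run this correspondence in reverse. By Nikulin's identification of primitive embeddings into a unimodular lattice with their complements \cite{nikulin}, it suffices to produce any even lattice $K$ of signature $(19-n,1)$ with $q_K\cong -q_T$; gluing $T\oplus K$ along the anti-isometry $A_T\cong A_K$ then yields an even unimodular overlattice of signature $(19,3)$, necessarily isometric to $L$. The existence of such a $K$ is governed by Nikulin's criterion, whose hypotheses are the signature congruence (checked above), the sign conditions $19-n\ge 0$, $1\ge 0$ (automatic, as $\rho\ge\ell+1\ge 1$ forces $n\le 19$), and a rank bound. The rank bound is where the assumption is used: writing $\ell=l(A_T)=\max_p l((A_T)_p)$, the hypothesis $\ell\le\rho-1$ gives $\rank K=\rho\ge\ell+1> l((A_T)_p)$ for every prime $p$, so the delicate $p$-adic conditions that appear when the rank equals the number of $p$-generators are all vacuous, and $K$ exists.

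For the uniqueness half, the strengthened hypothesis $\ell\le\rho-2$ is exactly what lets me apply Nikulin's uniqueness theorem for primitive embeddings into an even unimodular lattice \cite{nikulin}. Its hypotheses are the strict signature inequalities $n<19$ and $2<3$, together with $l(A_T)\le \rank L-\rank T-2=\rho-2$. The first holds because $\ell\le\rho-2$ forces $\rho\ge 2$, hence $n\le 18$; the second is the hypothesis itself. Under these conditions $K$ is indefinite, unique in its genus, and satisfies $O(K)\twoheadrightarrow O(q_K)$, so Nikulin's gluing description shows that all primitive embeddings of $T$ into $L$ are equivalent under $O(L)$.

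The only content-bearing point, and the main thing to get right, is the bookkeeping in the rank bound: one must correctly match $\rank K$ with $\rho$ and the signature $(19-n,1)$, and then translate the single inequality $\ell\le\rho-1$ (resp.\ $\ell\le\rho-2$) into the precise hypotheses of Nikulin's existence (resp.\ uniqueness) theorem, using $l(A_T)=\max_p l((A_T)_p)$ to clear the local obstructions at $p=2$ and the odd primes. The gap between $\rho-1$ and $\rho-2$ is accounted for entirely by the extra ``$+2$'' and the indefiniteness requirement in the uniqueness statement, which is the reason existence can be proved under the weaker bound while uniqueness cannot.
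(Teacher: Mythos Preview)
Your proof is correct and follows the same approach as the paper's: for (i) you pass to the orthogonal complement $K$ in the unimodular $K3$ lattice and use $l(A_T)=l(A_K)\le\rank K=\rho$, exactly as in the paper; for (ii) the paper simply invokes \cite[Cor.~1.12.3]{nikulin}, whereas you unpack and verify its hypotheses (signature congruence, rank bound, indefiniteness for uniqueness), which is a more detailed version of the same argument.
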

\begin{proof}
Assume $T$ embeds into the $K3$ lattice with orthogonal complement $M$.  Then, $A_T\cong A_M$. It follows 
$$\ell=l(A_T)=l(A_M)\le \min (\rank T, \rank M)=\min (\rho,22-\rho).$$
The converse follows from \cite[Cor. 1.12.3]{nikulin}.
\end{proof}

\begin{remark}\label{remmaxl}
The assumption $T$ embeds into the lattice $I_{21,2}$, automatically gives 
\begin{equation}\label{boundell}
\ell\le \min(\rho+1,22-\rho).
\end{equation}
If $\ell=\rho+1$, then $T$ does not admit an embedding into the $K3$ lattice (cf. Prop. \ref{propcond}(1)). Thus, the associated cubic fourfolds $X$ (with $T=T_X$) are potentially irrational in the sense of Def. \ref{defirrational}. An example is given by $X$ a general cubic fourfold, for which we have $T\cong A_2\oplus E_8^2\oplus U^2$,  $\rho=0$, and $\ell=1$.   Note that the algebraicity index is $\kappa_X=\frac{1}{3}$ for a general cubic fourfold (cf. \eqref{defkappa})\end{remark}

\begin{remark} Continuing the previous remark, let us note that for most cases $\ell\le \rho$, and that if this fails (i.e. $\ell=\rho+1$), then the algebraicity index $\kappa$ is small. First, note that  $N_0$ and $T$ are mutually orthogonal in $\Lambda_0$ and $A_{\Lambda_0}\cong \bZ/3$. It follows that either
$$A_T\equiv A_{N_0}/\bZ/3.$$
or  $A_T$ is a $\bZ/3$ extension of $A_{N_0}$. In the first case,  $\ell\le \rho$. While in the second case, the only possibility that $\ell=\rho+1$ is for the $3$-part of $A_T$ to have $\rho+1$ generators. Thus 
$$3^{\rho+1}\mid \det T=|A_T|.$$
which in turn gives $\kappa\le \frac{1}{3}$; the  equality holds only for  general  cubic fourfolds.
\end{remark}
 
In conclusion, the only interesting case for us is $\ell=\rho$. Furthermore, from Prop. \ref{propcond} and \eqref{boundell}, 
we note that if $\rho$ is big, then  $T$ embeds into the $K3$ lattice. 

\begin{proposition}\label{boundrho}  If $\rho\ge 11$, then $T$ admits a primitive embedding into the $K3$ lattice.
\end{proposition}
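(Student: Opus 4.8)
The plan is to verify the sufficient condition $\ell\le \rho-1$ of Proposition \ref{propcond}(ii) whenever $\rho\ge 11$, isolating a single borderline value of $\rho$ that I would treat separately. First I would recall from Remark \ref{remmaxl} the a priori bound $\ell\le \min(\rho+1,22-\rho)$. If $\rho\ge 12$, then $\ell\le 22-\rho\le \rho-2\le \rho-1$, so Proposition \ref{propcond}(ii) immediately produces a primitive embedding of $T$ into the $K3$ lattice (in fact a unique one, since $\ell\le\rho-2$). Thus the entire content of the statement is concentrated in the value $\rho=11$, where the bound only gives $\ell\le 22-\rho=11=\rho$, exactly one unit short of what Proposition \ref{propcond}(ii) requires (note also that $\ell=\rho+1=12$ is excluded here, since $\ell\le 22-\rho=11$).

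Next I would isolate the single borderline case $\rho=11$, $\ell=11$. Here $T$ has signature $(9,2)$ and rank $11$, while its orthogonal complement $M$ in the $K3$ lattice $E_8^2\oplus U^3$ (of signature $(19,3)$) must have signature $(\rho-1,1)=(10,1)$ and rank $\rho=11$. By Nikulin's criterion for primitive embeddings into an even unimodular lattice \cite[Thm. 1.12.2]{nikulin}, the desired embedding exists if and only if there is an even lattice $M$ with signature $(10,1)$ and discriminant form $q_M\cong -q_T$. The clean corollary no longer applies precisely because $\rank M = 11 = l(A_T)$ is as small as the discriminant group allows.

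To produce such an $M$ I would invoke the existence theorem for even lattices with prescribed signature and discriminant form \cite[Thm. 1.10.1]{nikulin}. The key observation is that the twist $T(-1)$ is an even lattice of signature $(2,9)$, rank $11$, and discriminant form exactly $-q_T$; it therefore already certifies that the local (in particular $2$-adic) existence conditions for the form $-q_T$ at rank $11 = l(A_T)$ are satisfiable. It then remains to check that the target signature $(10,1)$ is compatible with those conditions. The only signature-dependent inputs in \cite[Thm. 1.10.1]{nikulin} are the congruence $t_+-t_-\equiv \sign(q)\pmod 8$ and the sign $(-1)^{t_-}$ entering the determinantal conditions at each prime. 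Since $T(-1)$ and the sought-after $M$ share both $t_+-t_-\equiv 1\pmod 8$ (as $2-9\equiv 10-1\pmod 8$) and $t_-\equiv 1\pmod 2$, the conditions verified for $T(-1)$ transfer verbatim to $M$. Hence $M$ exists, and the embedding of $T$ into the $K3$ lattice follows.

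The step I expect to be the main obstacle is exactly this borderline $\rho=11$, $\ell=11$ analysis: Proposition \ref{propcond}(ii) just barely fails, and one is forced back to the full existence criterion, whose delicate part is the $2$-adic and mod $8$ bookkeeping. The clean way to dispatch it is the twist trick above, which lets $T(-1)$ absorb all the genuine local work and reduces the remaining verification to matching $t_+-t_-\bmod 8$ and the parity of $t_-$ between the signatures $(2,9)$ and $(10,1)$.
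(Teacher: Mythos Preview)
Your proof is correct and is essentially the same approach as the paper's: the paper simply cites \cite[Cor.~2.10 and Rem.~2.11]{morrison}, which is precisely your twist argument (using $T(-1)$ to certify the local conditions of \cite[Thm.~1.10.1]{nikulin} and then matching $t_+-t_-\bmod 8$ and the parity of $t_-$) carried out in general. The only difference is cosmetic: you split off $\rho\ge 12$ via Proposition~\ref{propcond}(ii) before handling the borderline $\rho=11$, whereas the paper invokes Morrison's result uniformly for all $\rho\ge 11$.
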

\begin{proof} $T$ has signature $(20-\rho,2)$. Note that $20-\rho\le 9$, and the proposition is precisely \cite[Cor. 2.10 and Rem. 2.11]{morrison} (based on \cite[Thm. 1.10.1]{nikulin}). 
\end{proof}

In conclusion, the maximal $\rho_X$ that can occur on a potentially irrational cubic fourfold $X$ is $10$ (N.B. we do not know if $\rho_X=10$ is achieved for some $X$). We expect a full classification of the cases $(\rho_X,d_X)$ for which $X$ is potentially irrational to be delicate as it involves the difficult case ($\ell=\rho$) of Nikulin's theory. For instance, we recall the following result of Hassett which answers the case $\rho=1$. We are also aware of some partial results for $\rho=2$ (e.g. \cite{AT}, \cite{auel}). 

\begin{theorem}[{Hassett \cite[Thm. 1.0.2]{hassettspecial}}] Let $X$ be a cubic fourfold and $d\equiv 0,2 \pmod 6$. Assume $\rho_X=1$ and $\det T_X=d$. Then $X$ is potentially irrational iff one of the following holds:
\begin{itemize}
\item[a)] $d\equiv 0 \pmod 9$
\item[b)] $d \equiv 0 \pmod 4$
\item[c)] $d$ has an odd prime factor $p\equiv - 1 \pmod 3$. 
\end{itemize}
\end{theorem}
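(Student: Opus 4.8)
The plan is to reduce the existence of an associated $K3$ surface to a single isomorphism of finite quadratic forms, and then to decide that isomorphism prime by prime. Since $\rho_X=1$, the transcendental lattice $T=T_X$ is even of signature $(19,2)$ and rank $21$, and its orthogonal complement $N=T^\perp$ in the unimodular lattice $H^4(X,\bZ)\cong I_{21,2}$ is the rank-two lattice of algebraic classes, containing the distinguished class $h$ with $h^2=3$ and with $|\det N|=\det T=d$ (Notation \ref{bignotation}). Unimodularity gives $A_T\cong A_N$ with $q_T\cong -q_N$; in particular $|A_T|=d$ and $\ell=l(A_T)=l(A_N)\le 2$. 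When we embed $T$ into the $K3$ lattice $(E_8)^2\oplus U^3$ (even, unimodular, of signature $(19,3)$), the orthogonal complement $M$ is forced to have rank $22-21=1$ and signature $(0,1)$; being an even negative-definite rank-one lattice with $|A_M|=d$, it can only be $M\cong\langle -d\rangle$ (which exists since $d$ is even, as $d\equiv 0,2\pmod 6$), and then $q_M=(-1/d)$. By Nikulin's criterion for primitive embeddings into unimodular lattices \cite[Thm.~1.12.2]{nikulin}, the embedding exists if and only if $q_M\cong -q_T$, i.e. if and only if
\begin{equation*}
q_T\cong q_{\langle d\rangle}=\Big(\tfrac{1}{d}\Big)\quad\text{on } \bZ/d .
\end{equation*}
Thus \emph{$X$ has an associated $K3$ surface if and only if $q_T$ is the standard cyclic form $(1/d)$}; in particular $A_T$ must be cyclic.

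First I would settle condition (a), which is exactly the failure of cyclicity. Using the explicit labellings of \cite{hassettspecial} one has $N\cong\left(\begin{smallmatrix}3&1\\1&(d+1)/3\end{smallmatrix}\right)$ when $d\equiv 2\pmod 6$ and $N\cong\langle 3\rangle\oplus\langle d/3\rangle$ when $d\equiv 0\pmod 6$. In the first case the entries have $\gcd=1$, so $A_N\cong\bZ/d$ is cyclic and $\ell=1$. In the second case $A_N\cong\bZ/3\oplus\bZ/(d/3)$, which is non-cyclic precisely when $3\mid d/3$, i.e. when $9\mid d$; equivalently $\ell=2=\rho+1$. By Proposition \ref{propcond}(i) a non-cyclic $A_T$ already obstructs the embedding, so $9\mid d$ forces $X$ potentially irrational. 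This is case (a), and it is the only way to produce $\ell=2$.

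It remains to treat the borderline case $\ell=\rho=1$ (cyclic $A_T\cong\bZ/d$) that Proposition \ref{propcond} does not resolve; here I would compute $q_T=-q_N$ explicitly and compare it with $(1/d)$ one prime at a time, using the orthogonal splitting $q_T=\bigoplus_{p\mid d}(q_T)_p$ and the classification of cyclic $p$-adic quadratic forms (\cite[\S 1.8]{nikulin}). For an odd prime $p\mid d$ with $p\neq 3$ the local comparison reduces to whether the unit $-3$ is a square modulo $p$; by quadratic reciprocity $\left(\tfrac{-3}{p}\right)=1$ exactly when $p\equiv 1\pmod 3$, so the $p$-part matches iff $p\equiv 1\pmod 3$ and obstructs iff $p\equiv -1\pmod 3$ --- this is case (c). The recurring factor $-3$ is the shadow of $h^2=3$ (equivalently of the $A_2$-summand of $\Lambda_0$, whose discriminant is $\bZ/3$). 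At $p=2$ the cyclic $2$-adic form is governed by its residue modulo $8$ (modulo $4$ when $2\| d$): one checks that the $2$-part of $q_T$ agrees with that of $(1/d)$ exactly when $v_2(d)\le 1$, and disagrees once $4\mid d$ --- this is case (b). Finally, when $3\| d$ there is an auxiliary $3$-adic condition ($d/3\equiv -1\pmod 3$); I would verify that whenever it fails, either $4\mid d$ or some odd prime $p\equiv -1\pmod 3$ divides $d$, so that this condition is subsumed by (b) and (c) and contributes nothing new. Assembling the local computations, $q_T\cong(1/d)$ holds if and only if none of (a), (b), (c) occurs, which is the asserted equivalence.

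The main obstacle is precisely this last, local step --- the case $\ell=\rho$ on which Nikulin's coarse count is silent. Concretely, the delicate points are the $2$-adic analysis (the even/odd dichotomy of $2$-adic forms and the passage from a $\bmod 4$ to a $\bmod 8$ invariant, which pins the threshold at $4\mid d$ rather than $8\mid d$) and the bookkeeping showing that the auxiliary $3$-adic condition is always implied by (b) or (c). Once the discriminant form $q_N$ is written down explicitly in each residue class of $d\bmod 6$, the remaining work is a finite, prime-by-prime check of isomorphism of finite quadratic forms.
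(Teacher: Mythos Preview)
The paper does not give its own proof of this statement; it is simply quoted as Hassett's result \cite[Thm.~1.0.2]{hassettspecial} to illustrate the $\rho=1$ case. So there is nothing in the paper to compare your argument against.

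That said, your plan is essentially Hassett's original proof. The reduction is exactly the right one: since $\rho_X=1$, any orthogonal complement of $T$ in the $K3$ lattice must be a rank-one even negative-definite lattice of discriminant $d$, hence $\langle -d\rangle$, and Nikulin's criterion turns the embedding question into the single identity $q_T\cong q_{\langle d\rangle}$ of finite quadratic forms on $\bZ/d$. Your case split by $d\bmod 6$ using the two standard Gram matrices for $N=K_d$ is the correct way to get hold of $q_T=-q_N$ explicitly, and the identification of condition~(a) with non-cyclicity of $A_T$ (i.e.\ $\ell=2$) is clean. The rest --- matching the odd-prime local pieces via the Legendre symbol $\left(\frac{-3}{p}\right)$ to obtain~(c), the $2$-adic analysis giving~(b), and the observation that the residual $3$-adic constraint is absorbed by (b) and (c) --- is exactly the computation Hassett carries out. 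Your own caveats about the $2$-adic step and the $3$-adic bookkeeping are well placed: those are the only places where a careless computation could go wrong, but the outline is sound.
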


 Our algebraicity index $\kappa=\frac{2^\rho}{d}$ is controlling the  growth of the rank $\rho$ relative to (log of) the size $d$ of the discriminant group (i.e. $\log_2\kappa=\rho-\log_2d$). It turns out that it is easy to classify the cases maximizing $\kappa$.  As a first step, we establish item (1) of  Theorem \ref{mainthm}:
\begin{corollary}\label{pfitem1}
If $X$ is a potentially irrational cubic fourfold. Then $\kappa_X\le 1$. Furthermore, if the equality holds, then $T_X$ is a $2$-elementary lattice. 
\end{corollary}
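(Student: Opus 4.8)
The plan is to reduce the whole statement to the two integers $\ell=l(A_T)$ and $\rho$, together with the elementary fact that a finite abelian group needs at least $\log_2$ of its order many generators. The first step is to translate the geometric hypothesis into a lattice inequality. Since $X$ is potentially irrational, $T=T_X$ does not admit a primitive embedding into the $K3$ lattice; by the contrapositive of Proposition \ref{propcond}(ii) (which guarantees such an embedding whenever $\ell\le\rho-1$), this forces
\[
\ell\ge\rho.
\]

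Next I would bound $d$ from below purely group-theoretically. Writing the discriminant group in invariant-factor form $A_T\cong\bZ/a_1\oplus\cdots\oplus\bZ/a_\ell$ with $1<a_1\mid a_2\mid\cdots\mid a_\ell$, each $a_i\ge 2$, so
\[
d=|A_T|=\prod_{i=1}^{\ell}a_i\ge 2^{\ell}.
\]
Here I am using that $d$ is genuinely the order of $A_T$, which holds because $A_T\cong A_N$ and $|A_N|=d$. Combining the two displays gives $d\ge 2^{\ell}\ge 2^{\rho}$, whence $\kappa_X=\tfrac{2^{\rho}}{d}\le 1$, which is the first assertion.

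For the equality clause, I would run the same inequalities backwards. Suppose $\kappa_X=1$, i.e.\ $d=2^{\rho}$. Then $2^{\ell}\le d=2^{\rho}$ forces $\ell\le\rho$, and with $\ell\ge\rho$ already in hand we conclude $\ell=\rho$. Substituting this into $d=\prod_{i=1}^{\ell}a_i=2^{\ell}$ with every $a_i\ge 2$ forces each invariant factor to equal $2$, so $A_T\cong(\bZ/2)^{\ell}$; that is, $T_X$ is a $2$-elementary lattice.

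The argument is essentially a packaging of Nikulin's embedding criterion (via Proposition \ref{propcond}) with a counting inequality, so I do not anticipate a genuine obstacle. The only points that require care are invoking Proposition \ref{propcond} in the correct contrapositive direction, so that ``potentially irrational'' really yields $\ell\ge\rho$, and recording that $d$ is the order of $A_T$ rather than merely a determinant up to sign; both are immediate in the present setup.
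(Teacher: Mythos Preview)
Your proposal is correct and follows essentially the same approach as the paper: both argue that potential irrationality forces $\ell\ge\rho$ via Proposition~\ref{propcond}, then use the elementary bound $d=|A_T|\ge 2^{\ell}$ to conclude $\kappa_X\le 1$, with equality forcing $A_T\cong(\bZ/2)^{\rho}$. The paper additionally records the upper bound $\ell\le\rho+1$ from Remark~\ref{remmaxl}, but this is not actually needed for the argument, so your slightly leaner version is fine.
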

\begin{proof}
Let $T:=T_X$ be the transcendental cohomology of $X$. By assumption $T$ does not admit a primitive embedding  into the $K3$ lattice. By Proposition \ref{propcond} and Remark \ref{remmaxl}, $\ell\in \{\rho,\rho+1\}$. Since $\ell$ is the minimal number of generators of the abelian group $A_T$, and $d$ is the order of $A_T$, we conclude 
$$d\ge 2^\ell \ge 2^\rho.$$
The equality holds iff $A_T\cong (\bZ/2)^\rho$, which is to say $T$ is $2$-elementary.     
\end{proof}

\section{The $2$-elementary case}\label{case2elementary}
The $2$-elementary lattices have been recognized  by Nikulin \cite{nikulinaut} as playing a key role in the geometry of $K3$ surfaces; for instance they control the possible involutions on a $K3$ surface. 

\subsection{Nikulin's classification of $2$-elementary lattices}  Let $M$ be a $2$-elementary lattice, i.e.  $A_M\cong (\bZ/2)^l$. In addition to the obvious invariants of $M$: signature $(t_+,t_-)$ and $l=l(A_M)$, there is a parity invariant $\delta$ defined  by 
\begin{equation}\label{defdelta}
\delta:=\left\{ \begin{matrix} 0 & \textrm{ if $q_M$ takes values in $\bZ/2\bZ\subset \bQ/2\bZ$}\\
1 & \textrm{ else.}
\end{matrix}\right.
\end{equation}  

\begin{example}
$D_4$ and $U(2)$ are $2$-elementary lattices with $\delta=0$. While $A_1$ and $E_7$ have $\delta=1$. 
\end{example}

\begin{theorem}[{Nikulin, see \cite[Thm. 1.5.2]{dolgachevN}}]\label{thm2elementary}
The genus of an even $2$-elementary lattice $M$ is determined by the invariants $\delta$, $l$ and $(t_+,t_-)$.  If $M$ is indefinite, then the genus consists of one isomorphism class. An even $2$-elementary lattice $M$ with invariants $\delta$, $l$, and $(t_+,t_-)$ exists iff the following conditions are satisfied:
\begin{enumerate}
\item[0)] $l,t_+,t_-\ge 0$, $\delta\in\{0,1\}$;
\item[1)] $t_++t_-\ge l$;
\item[2)] $t_++t_-+l\equiv 0 \pmod 2$;
\item[3)] if $\delta=0$, then $t_+-t_-\equiv 0 \pmod 4$ (e.g. $U(2)$, $D_4$);
\item[4)] if $l=0$, then $\delta=0$, $t_+-t_-\equiv 0 \pmod 8$ (e.g. unimodular); 
\item[5)] if $l=1$, then $\left| t_+-t_-\right|\equiv 1 \pmod 8$ (e.g. $A_1$, $E_7$);
\item[6)] if $l=2$ and $t_+-t_-\equiv 4 \pmod 8$, then $\delta=0$ (e.g. $U(2)$, $D_4$); 
\item[7)] if $\delta=0$ and $l=t_++t_-$, then $t_+-t_-\equiv 0 \pmod 8$ (e.g. $U(2)$, $E_8(2)$). 
\end{enumerate}
\end{theorem}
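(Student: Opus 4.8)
The plan is to split the statement into three layers and attack each with Nikulin's machinery. The first, and purely formal, layer is the reduction from lattices to finite quadratic forms. By the existence theorem \cite[Thm.~1.10.1]{nikulin}, an even lattice with signature $(t_+,t_-)$ and discriminant form $q$ exists precisely when $\sign q\equiv t_+-t_-\pmod 8$, when $t_++t_-\ge l(A_q)$, and when the local conditions at each prime hold; and by the uniqueness theorem \cite[Thm.~1.14.2]{nikulin}, an even indefinite lattice with $t_+,t_-\ge 1$ and $t_++t_-\ge l(A_M)+2$ is the unique member of its genus. Granting these, the genus of $M$ depends only on the pair $\big((t_+,t_-),q_M\big)$, and the theorem reduces to a classification of $2$-elementary finite quadratic forms together with a bookkeeping of which signatures are compatible with a given rank.

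The second layer is the classification of $2$-elementary forms, which I regard as the technical heart. Every finite quadratic form on $(\bZ/2)^l$ is an orthogonal sum of $a$ copies of $u$ and $b$ copies of $v$ — the discriminant forms of $U(2)$ and $D_4$, both with $\delta=0$ and with $\sign u=0$, $\sign v=4$ — together with $c_+$ copies of $\langle\tfrac12\rangle$ and $c_-$ copies of $\langle-\tfrac12\rangle$, the discriminant forms of $\langle 2\rangle$ and $\langle-2\rangle$, both with $\delta=1$ and $\sign\langle\pm\tfrac12\rangle=\pm1$. Here $l=2a+2b+c_++c_-$, the parity is $\delta=0$ iff $c_+=c_-=0$, and $\sign\equiv 4b+c_+-c_-\pmod 8$. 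I would first establish the relation $u\oplus u\cong v\oplus v$ (checked by matching rank, $\delta$, $\sign\bmod8$, and the number of isotropic vectors) and the absorption relations available once an odd summand is present; these reduce any form to a normal form depending only on $(l,\delta,\sign\bmod 8)$. The displayed formulas then give the necessary constraints at once: $\delta=0$ forces $\sign\equiv0\pmod4$ (item 3); $l=1$ forces $|\sign|\equiv1\pmod8$ (item 5); on $(\bZ/2)^2$ the value $\sign\equiv4$ is attained only by $v$ (item 6); and reducing $l\equiv c_++c_-$ and $\sign\equiv c_+-c_-\pmod 2$ yields $t_++t_-\equiv l\pmod2$ (item 2).

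The third layer concerns the boundary of the rank inequality $t_++t_-\ge l$, where the form must be realized in minimal rank; this is where items 4 and 7 live. An even unimodular lattice ($l=0$) has $\sign\equiv0\pmod8$, and a form with $\delta=0$ realized at rank exactly $l$ can contain no $v$ summand — each $v$ costs two units of rank beyond its contribution to $l$, as $D_4$ shows — hence equals $u^{\oplus l/2}$ and has signature $\equiv0\pmod8$ (item 7). These are precisely the obstructions carried by the minimal-rank case of \cite[Thm.~1.10.1]{nikulin}. For the uniqueness assertion I would invoke \cite[Thm.~1.14.2]{nikulin} whenever $t_++t_-\ge l+2$, and settle the finitely many indefinite configurations with $t_++t_-\in\{l,l+1\}$ by direct identification. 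I expect the enumeration of $2$-elementary forms and the precise matching of normal-form invariants to items 3--7 to be the main obstacle; the reduction to discriminant forms and the uniqueness statement are then formal consequences of Nikulin's existence and uniqueness theorems.
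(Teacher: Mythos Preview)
The paper does not prove this theorem: it is stated as Nikulin's result and cited from \cite[Thm.~1.5.2]{dolgachevN} without argument. Your proposal therefore goes well beyond what the paper itself does.

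As a sketch of Nikulin's actual proof your outline is broadly correct, but two points deserve tightening. First, your justification of item~(7) via ``each $v$ costs two units of rank beyond its contribution to $l$'' is not the right mechanism: both $u$ and $v$ are forms on $(\bZ/2)^2$, and nothing at the level of the finite form prevents $v$ from appearing when $l=\rank M$. The actual obstruction is lattice-theoretic: if $\delta=0$ and $l=t_++t_-$, then $M=2M^*$, and using $\delta=0$ one checks that $M(1/2)$ is an even unimodular lattice, which forces $t_+-t_-\equiv 0\pmod 8$. Second, the uniqueness assertion in the indefinite case does not reduce to ``finitely many'' configurations with $t_++t_-\in\{l,l+1\}$, since $l$ is unbounded; you need either Nikulin's refined argument for $2$-elementary lattices from \cite{nikulinaut} or a uniform reduction (for instance via the rescaling $M\mapsto M(1/2)$ when $l=\rank M$) rather than a case-by-case inspection.
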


\subsection{Potentially irrational cubics with $2$-elementary transcendental lattice} We are returning to our set-up (in particular, notations as in  \ref{bignotation}). In the context of Cor. \ref{pfitem1}, we assume that $T$ is a $2$-elementary lattice and that $T$ does admit a primitive embedding into the $K3$ lattice. 

\begin{theorem}\label{thm2elemcubic}
The only  $2$-elementary lattice $T$ of signature $(20-\rho,2)$ that embeds into $A_2\oplus (E_8)^2\oplus U^2$, but does not embed into the $K3$ lattice $(E_8)^2\oplus U^3$ is $T=(D_4)^3\oplus U^2$ (with $\sign=(14,2)$, $l=6$, and $\delta=0$).  
\end{theorem}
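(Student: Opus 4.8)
The plan is to combine the numerical constraints on $2$-elementary lattices from Nikulin's Theorem \ref{thm2elementary} with the earlier reduction showing $\ell=\rho$ and $d=2^\rho$ in the extremal case. First I would record the constraints forced by the geometry: since $T$ is a $2$-elementary lattice of signature $(20-\rho,2)$ that is potentially irrational with $\kappa=1$, Corollary \ref{pfitem1} gives $\ell=\rho$ (so $l=l(A_T)=\rho$), and Proposition \ref{propcond}(i) combined with Remark \ref{remmaxl} tells us we are exactly in the boundary case $\ell=\rho$ where embedding into the $K3$ lattice can fail. In the $2$-elementary notation of Theorem \ref{thm2elementary}, I have $t_+=20-\rho$, $t_-=2$, $l=\rho$, so $t_++t_-=22-\rho=22-l$.

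Next I would apply Proposition \ref{propcond}(ii) in reverse: since $T$ does \emph{not} embed into the $K3$ lattice, we cannot have $\ell\le\rho-1$, so indeed $\ell=\rho$, and moreover the failure of embedding for a $2$-elementary lattice in the boundary case $l=\rho$ should pin down the remaining discrete invariant $\delta$. Here the key is that the orthogonal complement $M$ of $T$ in the $K3$ lattice (if it existed) would be a $2$-elementary lattice of signature $(22-\rho-(20-\rho), 0)=(2,0)$ with $l(A_M)=l(A_T)=\rho$. I would show that when $\rho=l$ is maximal relative to the rank, the only obstruction to the existence of such $M$ comes from the parity/signature congruences (3)--(7) of Theorem \ref{thm2elementary}, and that these force $\delta=0$ together with a specific value of $\rho$.

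The heart of the argument is then a finite search over the admissible triples $(t_+,t_-,l,\delta)=(20-\rho,2,\rho,\delta)$ satisfying conditions (1)--(7). I would check condition (2): $t_++t_-+l=22-\rho+\rho=22\equiv 0\pmod 2$, always satisfied. Condition (3) with $\delta=0$ requires $t_+-t_-=18-\rho\equiv 0\pmod 4$, i.e. $\rho\equiv 2\pmod 4$. Using the earlier bound $\rho\le 10$ (Proposition \ref{boundrho}) and $\ell=\rho$ with $\ell\le 22-\rho$, the candidates narrow to a handful of values of $\rho$. For each surviving candidate I would use condition (7) (applicable since we expect $l=\rho$ close to $t_++t_-$ after subtracting the definite part) and the existence/uniqueness of indefinite $2$-elementary genera to decide which lattice actually fails to embed. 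The expected conclusion is that the unique solution is $\rho=6$, $\delta=0$, giving signature $(14,2)$, $l=6$, and that the unique indefinite lattice in this genus is $(D_4)^3\oplus U^2$ — verifying that this lattice does embed into $A_2\oplus(E_8)^2\oplus U^2$ but not into the $K3$ lattice.

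The main obstacle I anticipate is the embedding (rather than existence) statements: Theorem \ref{thm2elementary} classifies which $2$-elementary genera exist, but I must separately verify (a) that $(D_4)^3\oplus U^2$ genuinely embeds into $A_2\oplus(E_8)^2\oplus U^2$ primitively, and (b) that it genuinely fails to embed into the $K3$ lattice, and (c) that all \emph{other} numerically admissible candidates actually \emph{do} embed into the $K3$ lattice (so they are excluded). Step (c) is delicate because it sits exactly in the boundary case $\ell=\rho$ of Nikulin's criterion where Proposition \ref{propcond} is silent; resolving it requires direct use of the existence criteria in Theorem \ref{thm2elementary} applied to the hypothetical orthogonal complement $M$ of signature $(2,0)$, checking that $M$ exists as a $2$-elementary lattice with the matching invariants precisely when $\rho\ne 6$ (or when $\delta=1$), and that it fails to exist when $\rho=6$, $\delta=0$. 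I would organize this as a short table of the triples $(t_+,t_-,l)=(2,0,\rho)$ for the complement and read off from conditions (3)--(7) that existence fails uniquely at $\rho=6$, $\delta=0$.
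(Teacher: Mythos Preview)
Your overall strategy---reduce to the existence of the orthogonal complement $M$ of $T$ in the $K3$ lattice and then test the candidate invariants against Nikulin's Theorem~\ref{thm2elementary}---is exactly the paper's approach. However, there is a genuine computational error that would derail the argument.

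You claim that the hypothetical complement $M=T^\perp$ in the $K3$ lattice has signature $(2,0)$. That is incorrect: the $K3$ lattice $(E_8)^2\oplus U^3$ has signature $(19,3)$ (recall Convention~\ref{convention}: the root lattices are positive definite), while $T$ has signature $(20-\rho,2)$. Hence $M$ has signature $(\rho-1,1)$ and rank $\rho$, not rank $2$. With your signature $(2,0)$ and $l(A_M)=\rho$, condition~(1) of Theorem~\ref{thm2elementary} (namely $t_++t_-\ge l$) already fails for every $\rho>2$, so you would wrongly conclude that $T$ fails to embed into the $K3$ lattice for \emph{all} $\rho>2$---contradicting, e.g., the case $\rho=10$ where the embedding does exist.

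With the correct signature $(\rho-1,1)$ for $M$, the argument runs as follows (and this is what the paper does). One has $t_++t_-=\rho=l$, so condition~(7) of Theorem~\ref{thm2elementary} is the operative one. If $\delta=1$ there is no obstruction (indeed $M=A_1\oplus (A_1(-1))^{\rho-1}$ works), so a potentially irrational $T$ must have $\delta_T=0$. Then condition~(3) forces $\rho\equiv 2\pmod 4$, leaving $\rho\in\{2,6,10\}$; and condition~(7) says $M$ exists iff $t_+-t_-=2-\rho\equiv 0\pmod 8$, i.e.\ $\rho\in\{2,10\}$. Thus $\rho=6$ is the unique failure, giving $T\cong (D_4)^3\oplus U^2$. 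Your step (a), checking that this $T$ actually embeds primitively into $A_2\oplus(E_8)^2\oplus U^2$, is handled in the paper by citing \cite{LPZ}.
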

\begin{proof}
By Propositions \ref{propcond} and \ref{boundrho}, we can restrict to the case $\ell=\rho \le 10$ (here $l=\ell$). By Nikulin \cite[Thm. 1.12.2]{nikulin} $T$ admits an embedding into the $K3$ lattice iff there exists a $2$-elementary lattice $M$ (recall $A_M\cong A_T$) with invariants $\sign=(1,\rho-1)$, $l=\rho$, and $\delta=\delta_T$. Inspecting Theorem \ref{thm2elementary}, we see that if $\delta=1$, there is no further restriction on the existence of $M$ (or, more elementary, we can take $M=A_1\oplus(A_1(-1))^\rho$, corresponding geometrically to a double cover of $\bP^2$ branched in a sextic with $\rho(\le 10)$ nodes). Thus, $T$ as in the theorem should have additionally $\delta_T=0$. Under this assumption, the condition (3) of Theorem \ref{thm2elementary} gives $\rho\equiv 2\pmod 4$. There are $3$ values in the range of interest here: $\rho\in\{2,6,10\}$. The condition (7) of \ref{thm2elementary} then says that $M$ exists iff $\rho\equiv 2 \pmod 8$ (for the existence part, we can take $U(2)$ (i.e. hyperelliptic $K3$) and $U(2)\oplus E_8(2)$ (i.e. double cover of Enriques) respectively; the subtle part, where we need Thm. \ref{thm2elementary}, is the non-existence of $M$ for $\rho=6$). 

We conclude that the only potential case for $T$ is the $2$-elementary lattice with invariants $\sign=(14,2)$, $l=6$, and $\delta=0$. Since $T$ is indefinite, the isometry class of $T$ is determined by the invariants. Since $(D_4)^3\oplus U^2$ and $T$ have the same invariants, we conclude $T\cong  (D_4)^3\oplus U^2$.  The existence of an embedding of $T$ into the cubic lattice was established in \cite{LPZ}. In particular, we note that with respect to the embedding $T\hookrightarrow A_2\oplus (E_8)^2\oplus U^2$ considered in \cite{LPZ},
the orthogonal complement of $T$ is $E_6(2)$.  
\end{proof}

To complete the proof of item (3) of Theorem \ref{mainthm}, we need to conclude that the embedding of $T$ into $A_2\oplus (E_8)^2\oplus U^2$  is unique. By Nikulin's theory, this boils down to the statement that $E_6(2)$ is unique in its genus. This is probably well known, but for lack of a reference,  we provide a proof. 
\begin{lemma}\label{lemmauniquee6}
The only isometry class in the genus of $E_6(2)$ is $E_6(2)$. 
\end{lemma}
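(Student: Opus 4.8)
The plan is to show that the genus of $E_6(2)$ contains a single isometry class, which by Nikulin's theory will complete the uniqueness of the embedding $T \hookrightarrow \Lambda_0$. First I would record the basic invariants of $L := E_6(2)$: it is a positive definite even lattice of rank $6$, with determinant $\det L = 2^6 \cdot \det E_6 = 2^6 \cdot 3 = 192$, and discriminant group $A_L$ obtained by scaling. Since $E_6$ is unimodular-free with discriminant group $\bZ/3$ and $E_6(2)$ scales the form by $2$, the discriminant group $A_{E_6(2)}$ has order $192$ and is of the shape $(\bZ/2)^a \oplus (\bZ/3)$ for an appropriate $a$; I would compute that the $2$-part is $(\bZ/2)^4$ (coming from the rank-$5$ sublattice on which the scaling introduces $2$-torsion) together with the $\bZ/3$ inherited from $E_6$, so that $l(A_L) = 5$ but the $2$-elementary part has length $4$. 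The key point is to pin down $q_L$ precisely as a quadratic form on $A_L$.

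The main tool is Nikulin's uniqueness criterion, \cite[Thm. 1.14.2]{nikulin}, which states that an even lattice $L$ of signature $(t_+,t_-)$ is unique in its genus provided $t_+ \ge 1$, $t_- \ge 1$, and $\rank L \ge l(A_L) + 2$. Unfortunately $E_6(2)$ is definite, so this direct criterion does not apply, and that is the crux of the difficulty. The plan to circumvent this is to pass to an indefinite stabilization: I would consider $L' := E_6(2) \oplus U$, which is an even indefinite lattice of signature $(7,1)$ with $A_{L'} \cong A_{E_6(2)}$ (since $U$ is unimodular). For $L'$ one checks $t_+ = 7 \ge 1$, $t_- = 1 \ge 1$, and $\rank L' = 8 \ge l(A_{L'}) + 2 = 5 + 2 = 7$, so Nikulin's criterion applies and $L'$ is unique in its genus.

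From the uniqueness of $L' = E_6(2) \oplus U$ in its genus, I would deduce uniqueness of $E_6(2)$ itself. The argument is: suppose $L''$ is another lattice in the genus of $E_6(2)$, i.e. $L''$ is even positive definite of rank $6$ with $(A_{L''}, q_{L''}) \cong (A_{E_6(2)}, q_{E_6(2)})$ and the same signature. Then $L'' \oplus U$ lies in the genus of $E_6(2) \oplus U = L'$, since adding the unimodular $U$ preserves the discriminant form and adjusts the signature identically. By the established uniqueness, $L'' \oplus U \cong E_6(2) \oplus U$. The final step is a cancellation argument: because $U$ is an indefinite unimodular summand and both $L''$ and $E_6(2)$ are definite of the same rank, the Eichler/Witt cancellation theorem for indefinite forms (or equivalently Nikulin's observation that orthogonal complements of primitively embedded unimodular pieces are determined up to isometry) yields $L'' \cong E_6(2)$. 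Thus the genus of $E_6(2)$ is a single class. The step I expect to require the most care is the precise computation of $q_{E_6(2)}$ and verifying $l(A_{E_6(2)}) \le \rank - 2$ after stabilization; the cancellation step is standard but should be cited explicitly (e.g. via \cite[Thm. 1.13.2]{nikulin} or the classical cancellation for indefinite unimodular summands).
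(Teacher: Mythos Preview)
Your computation of the discriminant group is off: scaling a rank $6$ lattice by $2$ contributes a full $(\bZ/2)^6$ to the discriminant, so $A_{E_6(2)}\cong(\bZ/2)^6\times\bZ/3$ and $l(A_{E_6(2)})=6$, not $5$. (Concretely, the Smith normal form of the Gram matrix of $E_6(2)$ is $\mathrm{diag}(2,2,2,2,2,6)$.) This is a minor slip; after correcting it, Nikulin's criterion for $E_6(2)\oplus U$ still holds with equality $l=6=\rank-2$, so that part of your plan survives.

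The real gap is the cancellation step. The implication $L''\oplus U\cong E_6(2)\oplus U\ \Rightarrow\ L''\cong E_6(2)$ is \emph{false} in general over~$\bZ$, and in fact its failure is exactly the phenomenon you are trying to rule out. Whenever a definite genus has more than one class, stabilizing by $U$ collapses them: the classical example is $E_8^2$ and $D_{16}^+$, which are non-isometric even unimodular definite lattices with $E_8^2\oplus U\cong D_{16}^+\oplus U\cong II_{17,1}$. So ``Witt/Eichler cancellation'' does not apply here; Witt cancellation is a statement over fields, and Eichler's lemma requires two hyperbolic planes, which a signature $(7,1)$ lattice cannot contain. Your argument is therefore circular: it reduces the question to a cancellation statement that is equivalent to the original claim.

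The paper's proof avoids this entirely. It argues directly that any $L$ in the genus of $E_6(2)$ has all norms divisible by $4$: if some $v$ had $v^2\equiv 2\pmod 4$, a discriminant-group analysis of $\langle v\rangle\oplus\langle v\rangle^\perp\subset L$ forces the $2$-part of $q_L$ to have $\delta=1$, contradicting the fact that $q_{E_6(2)}[2]$ is $\bZ/2\bZ$-valued. Once every norm is divisible by $4$, the rescaled lattice $L(1/2)$ is even and lies in the genus of $E_6$, which is classically known to have a single class; hence $L\cong E_6(2)$. This sidesteps stabilization altogether and uses instead the specific parity feature ($\delta=0$) of the $2$-elementary part of $A_{E_6(2)}$.
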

\begin{proof}
Assume $L$ is a lattice in the genus of $E_6(2)$. Thus, $L$ is even positive definite and 
$$A_L\cong A_{E_6(2)}\cong \bZ/3\times (\bZ/2)^6.$$
Furthermore, the finite quadratic form $q_L[2]$ on $A_L[2](:=3A_L)$ takes only integral values (i.e. it maps to $\bZ/2\bZ\subset \bQ/2\bZ$).

\begin{claim} The norm of any vector $L$ (a lattice in the genus of $E_6(2)$ is divisible by $4$.
\end{claim} 
\noindent{\it Proof (Claim).} If not, let $v\in L$ with $v^2\equiv 2\pmod 4$. Let $L':=\langle v\rangle_L^\perp$. We can view $L$ as an overlattice of $\langle v\rangle \oplus L'$. Following \cite[\S1.4]{nikulin}, there exists a gluing group $H$ with $H\subset A_{\langle v\rangle}\oplus A_{L'}$
such that 
$$A_{L}\cong H^\perp/H$$
where the orthogonal complement is taken in $A_{\langle v\rangle}\oplus A_{L'}$ with respect to the finite quadratic form $q_{\langle v\rangle}\oplus q_{L'}$ (and $H$ is totally isotropic w.r.t. to $q_{\langle v\rangle}\oplus q_{L'}$; in particular, $H\subset H^\perp$). Let us focus on the $2$-Sylow subgroups of the various  discriminant groups. First, since $v\equiv 2\pmod 4$, we get $A_{\langle v\rangle}[2]\cong \bZ/2$. On the other hand, $l(A_{L'}[2])\le 5$ and $l(A_L[2])=6$. We conclude that $H[2]$ is trivial.  Thus, we have a direct sum decomposition  
$$A_{L}[2]\cong A_{\langle v\rangle}[2]\oplus A_{L'}[2].$$
This clearly contradicts (since $A_{\langle v\rangle}[2]$ forces $\delta=1$) the fact that $A_{L}[2]$ takes only values in $\bZ/2\bZ\subset \bQ/2\bZ$.   \qed

Returning to the lemma, as a consequence of the claim,  we also have $v.w\equiv 0 \pmod 2$ for any $v,w\in L$ (e.g. $2\langle v,w\rangle=(v+w)^2-v^2-w^2$). 
We conclude that we can rescale $L$ by $\frac{1}{2}$. Thus $L(1/2)\cong E_6=E_6(2\cdot 1/2)$ (as $E_6$ is unique in its genus). The claim follows.
\end{proof}

\begin{theorem}\label{thmuniqe}
Let $T=(D_4)^3\oplus U^2$. Then there exists a unique primitive embedding of $T$ into the lattice $\Lambda_0=A_2\oplus (E_8)^2\oplus U^2$ (up to $O^*(\Lambda_0)$). This extends to a unique  embedding $T$ into the cubic lattice $I_{21,2}$. 
\end{theorem}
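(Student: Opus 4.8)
The plan is to run Nikulin's theory of primitive embeddings (\cite[\S1.15]{nikulin}), reducing the uniqueness statement to two inputs that are already in hand: the uniqueness of the orthogonal complement in its genus (Lemma \ref{lemmauniquee6}) and the surjectivity of $O(T)\to O(q_T)$. Throughout, write $N_0=E_6(2)$ for the orthogonal complement of $T$ in $\Lambda_0$ produced in \cite{LPZ} (cf. Theorem \ref{thm2elemcubic}), and recall that $A_T\cong(\bZ/2)^6$, $A_{N_0}\cong \bZ/3\times(\bZ/2)^6$, and $A_{\Lambda_0}\cong\bZ/3$.

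First I would analyze the gluing. Any primitive embedding $T\hookrightarrow\Lambda_0$ realizes $\Lambda_0$ as an even overlattice of $T\oplus N_0'$, where $N_0'=T^\perp$, corresponding to an isotropic subgroup $H\subset A_T\oplus A_{N_0'}$ with $H^\perp/H\cong A_{\Lambda_0}\cong\bZ/3$ and with both projections $H\to A_T$, $H\to A_{N_0'}$ injective (primitivity). Since $A_{\Lambda_0}$ has no $2$-torsion and $A_T$ has no $3$-torsion, the $3$-part of the gluing is trivial, and $H$ is forced to be the graph of an anti-isometry $\gamma\colon q_T\xrightarrow{\sim}-q_{N_0'}|_{2}$ between $A_T$ and the $2$-part $A_{N_0'}[2]$. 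In particular the discriminant form of $N_0'$ is pinned down, $q_{N_0'}\cong(-q_T)\oplus q_{\Lambda_0}$, independently of the embedding; together with $\sign(N_0')=(6,0)$ this fixes the genus of $N_0'$. Since this genus contains $E_6(2)$ (by the embedding of \cite{LPZ}), Lemma \ref{lemmauniquee6} forces $N_0'\cong E_6(2)=N_0$.

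Next I would settle the gluing itself. After identifying $N_0'\cong N_0$, two primitive embeddings differ only through the choice of anti-isometry $\gamma$, and by \cite[Prop. 1.15.1]{nikulin} they are $O(\Lambda_0)$-equivalent exactly when the corresponding $\gamma,\gamma'$ lie in one double coset of $\bar O(q_{N_0})\backslash\{\text{anti-isometries}\}/\bar O(q_T)$, where $\bar O(\cdot)$ is the image of the isometry group in the orthogonal group of the discriminant form. The key point is that $T=(D_4)^3\oplus U^2$ is even and indefinite with $\rank T=16\ge l(A_T)+2=8$, so Nikulin's criterion \cite[Thm. 1.14.2]{nikulin} gives that $O(T)\to O(q_T)$ is surjective. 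Hence $\bar O(q_T)=O(q_T)$ acts transitively (by precomposition) on the anti-isometries $q_T\xrightarrow{\sim}-q_{N_0}[2]$, collapsing the double coset to one class. Concretely, given $\gamma,\gamma'$ one lifts $\gamma'^{-1}\circ\gamma\in O(q_T)$ to some $\phi_T\in O(T)$; then $\phi_T\oplus\id_{N_0}$ preserves the overlattice $\Lambda_0$ and carries one embedding to the other. Because this isometry is the identity on $N_0$, hence on the $3$-part $A_{\Lambda_0}=A_{N_0}[3]$, it lies in $O^*(\Lambda_0)$, so the embedding of $T$ into $\Lambda_0$ is unique up to $O^*(\Lambda_0)$.

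Finally, the extension to $I_{21,2}$ follows from the setup of Notation \ref{bignotation}: since $\Lambda_0=\langle h\rangle^\perp_{I_{21,2}}$ with the even-type vector $h$ ($h^2=3$) unique up to $O(I_{21,2})$, any primitive embedding $T\hookrightarrow I_{21,2}$ has $h\in T^\perp$, and after moving $h$ onto the fixed vector it factors through $T\hookrightarrow\Lambda_0$; uniqueness then transfers from the previous paragraph. I expect the only steps needing real care to be the identification of the complement's genus — i.e. checking that $q_{N_0'}$ is genuinely independent of $\gamma$, which is precisely what reduces the problem to Lemma \ref{lemmauniquee6} — and the verification that the correcting isometry acts trivially on $A_{\Lambda_0}$, which is what upgrades the conclusion from $O(\Lambda_0)$ to $O^*(\Lambda_0)$.
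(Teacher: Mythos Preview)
Your proposal is correct and follows essentially the same route as the paper: Nikulin's framework for primitive embeddings, identification of the orthogonal complement's genus and appeal to Lemma~\ref{lemmauniquee6}, surjectivity of $O(T)\to O(q_T)$ to absorb the gluing ambiguity, and the observation that the correcting isometry acts trivially on the $3$-part $A_{\Lambda_0}$ to upgrade to $O^*(\Lambda_0)$. The only notable cosmetic differences are that you invoke the rank inequality $\rank T\ge l(A_T)+2$ for surjectivity whereas the paper points to the two $U$ summands, and that you frame the gluing ambiguity explicitly as a double coset while the paper argues more informally; neither changes the substance.

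One small point of care in your final paragraph: the assertion that ``any primitive embedding $T\hookrightarrow I_{21,2}$ has $h\in T^\perp$'' is not a free fact---it is part of the setup of Notation~\ref{bignotation} (we only consider embeddings for which $N=T^\perp$ contains the even-type class $h$, as is automatic for cubic fourfolds). The paper handles the extension to $I_{21,2}$ in the reverse direction, by noting that the gluing vector for $\Lambda_0\oplus\langle h\rangle\subset I_{21,2}$ can be taken inside $M=E_6(2)$ (since the $\bZ/3$ lives in $A_M$), so the extension is determined once $T\hookrightarrow\Lambda_0$ is. Your phrasing works too, provided you either restrict to embeddings with $h\in T^\perp$ from the outset or supply the (easy) argument that $T^\perp$ in $I_{21,2}$ must contain such a vector; and you should also note explicitly that elements of $O^*(\Lambda_0)$ extend (by the identity on $\langle h\rangle$) to isometries of $I_{21,2}$, which is what lets the $\Lambda_0$-uniqueness transfer.
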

\begin{proof} The embedding $T\hookrightarrow \Lambda_0$ depends on the isometry class of $M=T^{\perp}_{\Lambda_0}$ and the choice of gluing subgroup $H\subset A_T\oplus A_M$ (with $H^\perp/H\cong A_{\Lambda_0}\cong \bZ/3$). Since $A_T\cong (\bZ/2)^6$, $A_{\Lambda_0}\cong \bZ/3$, and $M$ is positive definite of rank $6$, we conclude $A_M\cong A_{A_2}\oplus A_T$ and $M\cong H$. Since $E_6(2)$ and $T$ admit mutually orthogonal embeddings in $\Lambda_0$, we conclude that $M$ is in the same genus as $E_6(2)$ (as there is a unique choice of invariants). By Lemma \ref{lemmauniquee6}, $M\cong E_6(2)$ (and thus there is no choice for $M$). 

It remains to consider the choice of gluing group $H$. As noted, the natural projection $H\subset A_M\oplus A_T\to A_T$ induces an isomorphism $H\cong A_T(\cong (\bZ/2)^6$). Fixing an identification of the $2$-primary part in $A_M$ with  $A_T$, we see that the choices of $H$ induce automorphisms of $A_T (\cong \bZ/2)^6$, and in fact they are in $1$-to-$1$ correspondence with the isometries in $O(q_T)\cong W(E_6)$. Finally, since $T$ contains two hyperbolic summands $U$, the natural morphism $O(T)\to O(q_T)$ is surjective (\cite[Thm. 1.14.2]{nikulin}), concluding the proof of the fact that $T$ admits a unique embedding into $\Lambda_0$ modulo $O^+(\Lambda_0)$.  Since $A_{\Lambda_0}\cong \bZ/3$ is identified with the $3$-primary part of $A_M$, and since the gluing group $H$ only involves the $2$-primary part,  all choices involved (and identifications made)  respect the discriminant $A_{\Lambda_0}$. Thus, we get uniqueness modulo $O^*(\Lambda_0)$. 

The lattice $\Lambda$ is obtained by glueing $\Lambda_0$ and $\langle h\rangle$ (with $h^2=3$) along $H\cong \bZ/3$. Arguing as above,   we can chose the gluing vector $v$ (i.e. $v$ such that $v+h$ is divisible by $3$ in $\Lambda$) in $M$. The claim follows.
\end{proof}

At this point, we conclude that there is only one class of maximally algebraic potentially irrational cubics. 

\begin{corollary}\label{pfitem3}
The Zariski closure of the locus of potentially irrational cubic fourfolds $X$  with $\kappa_X=1$ is an \underline{irreducible} $14$ dimensional algebraic subvariety in the moduli space $\calM$ of cubic fourfolds 
\end{corollary}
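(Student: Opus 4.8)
The plan is to deduce Corollary \ref{pfitem3} directly from the uniqueness of the primitive embedding $T=(D_4)^3\oplus U^2\hookrightarrow I_{21,2}$ established in Theorem \ref{thmuniqe}, combined with Corollary \ref{pfitem1} and Theorem \ref{thm2elemcubic} which pin down the lattice $T$. First I would recall that by Corollary \ref{pfitem1}, any potentially irrational cubic with $\kappa_X=1$ must have $2$-elementary transcendental lattice $T_X$, and by Theorem \ref{thm2elemcubic} the only such lattice (of the correct signature, embedding into $A_2\oplus(E_8)^2\oplus U^2$ but not into the $K3$ lattice) is $T_X\cong (D_4)^3\oplus U^2$. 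Thus the locus in question is precisely the locus of cubic fourfolds whose transcendental lattice is isometric to $(D_4)^3\oplus U^2$, equivalently whose algebraic lattice $N_X$ contains a primitively embedded copy of $E_6(2)$ glued appropriately to the polarization class $h$.

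Next I would invoke the period map and the Torelli theorem for cubic fourfolds (Voisin \cite{voisin}, together with the global description via \cite{cubic4fold,lcubic}) to translate the lattice-theoretic datum into a geometric one. The period domain $\calD$ associated to $\Lambda_0=A_2\oplus(E_8)^2\oplus U^2$ has dimension $20$, and imposing that the transcendental lattice be orthogonal to a fixed primitively embedded sublattice $E_6(2)\hookrightarrow\Lambda_0$ cuts out a sub-period-domain $\calD_T$ of dimension equal to $\dim T-2 = 16-2 = 14$ (the complex dimension of the type IV domain attached to a lattice of signature $(14,2)$). Theorem \ref{thmuniqe} guarantees that there is a \emph{single} $O^*(\Lambda_0)$-orbit of such embeddings, so $\calD_T$ is a single irreducible component rather than a union over inequivalent embeddings; its image in the quotient $\calM\cong \calD/\Gamma$ (where $\Gamma$ is the relevant arithmetic monodromy group) is therefore an irreducible algebraic subvariety. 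The locus of \emph{all} cubics with $\kappa_X=1$ and potentially irrational is exactly this one image, and passing to the Zariski closure keeps it irreducible of dimension $14$.

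The main obstacle I anticipate is the careful bookkeeping needed to conclude \emph{irreducibility} rather than merely finiteness of components. The subtle point is that a priori the locus could be a finite union indexed by the distinct $\Gamma$-orbits of primitive embeddings of $E_6(2)$ (equivalently of $T$) into the cubic lattice; irreducibility holds precisely because Theorem \ref{thmuniqe} shows there is a unique such orbit up to $O^*(\Lambda_0)$, and one must check that $O^*(\Lambda_0)$ (rather than the full $O(\Lambda_0)$) is the group governing the moduli identification, so that no spurious splitting into several components occurs. This is exactly where the surjectivity of $O(T)\to O(q_T)$ used in Theorem \ref{thmuniqe} feeds in: it ensures the stabilizer acts transitively on the relevant markings. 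Once irreducibility of $\calD_T$ is in hand, the dimension count $\dim_{\bC}\calD_T = 14$ is routine from the signature $(14,2)$ of $T$, and algebraicity of the image follows from the general theory of periods for cubic fourfolds, giving the stated $14$-dimensional irreducible subvariety of $\calM$.
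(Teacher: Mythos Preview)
Your approach matches the paper's: reduce to $T_X\cong(D_4)^3\oplus U^2$ via Corollary~\ref{pfitem1} and Theorem~\ref{thm2elemcubic}, invoke Theorem~\ref{thmuniqe} for uniqueness of the embedding, and read off an irreducible $14$-dimensional locus in period space. The dimension count and the irreducibility discussion are correct.

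There is, however, one genuine step missing. You write ``its image in the quotient $\calM\cong \calD/\Gamma$'', but this identification is not quite right: the period map for cubic fourfolds is an isomorphism onto the \emph{complement} $\calD/\Gamma\setminus(\calC_2\cup\calC_6)$, not onto all of $\calD/\Gamma$ (cf.\ \cite{cubic4fold,lcubic}). So after producing the irreducible codimension-$6$ locus $Z\subset\calD/\Gamma$, you still need to check that $Z$ is not entirely contained in $\calC_2\cup\calC_6$; otherwise $\calP^{-1}(Z)$ could be empty or of the wrong dimension, and there would be no actual cubic fourfolds realizing this transcendental lattice. The paper handles this by observing that period points lying on $\calC_2$ or $\calC_6$ have, from a lattice-theoretic viewpoint, an associated $K3$ surface (of degree $2$ or $6$ respectively), which is incompatible with the defining property that $T\cong(D_4)^3\oplus U^2$ does \emph{not} embed into the $K3$ lattice. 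Hence $Z\not\subset\calC_2\cup\calC_6$, and $\calP^{-1}(Z\setminus(\calC_2\cup\calC_6))$ is nonempty, irreducible, and $14$-dimensional. You should add this verification.
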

\begin{proof}
By Corollary \ref{pfitem1} and Theorem \ref{thm2elemcubic}, we conclude that the transcendental lattice $T_X$ of a cubic fourfold as in the statement of the corollary is isometric to $(D_4)^3\oplus U^2$. By Theorem \ref{thmuniqe}, $T_X$ admits a unique embedding into the primitive cubic lattice $\Lambda_0$. Thus, we can uniquely (up to monodromy)  mark a transcendental lattice $T\cong (D_4)^3\oplus U^2$ inside $H^4(X,\bZ)$. It follows that  
 the locus of cubics with periods belonging to $T$ is an irreducible  subvariety $Z$ (of codimension $6$) of the period domain $\calD/\Gamma$. 
 
 Recall $\calD/\Gamma$ is a quasi-projective variety (Baily-Borel Theorem), and that there exists a period map 
 $$\calP:\calM\to \calD/\Gamma\setminus (\calC_2\cup \calC_6),$$
 that is an isomorphism of quasi-projective varieties (cf. \cite{voisin}, \cite{cubic4fold}, \cite{lcubic}). We conclude that $\calP^{-1}\left(Z\setminus (\calC_2\cup \calC_6)\right)\subset \calM$ is an irreducible subvariety in the moduli space $\calM$ of cubic fourfolds parameterizing (the closure of the locus of) maximally algebraic potentially irrational cubic fourfolds. 
 
 A final point here is that  the divisors $\calC_2$ and $\calC_6$ that are missing from the image of the period map (cf. \cite{hassettspecial}, \cite{cubic4fold}, \cite{lcubic}) have (from a lattice theoretic point of view) associated $K3$ surfaces, of degree $2$ and $6$ respectively. This implies that the locus $Z$ will not be completely contained in $\calC_2\cup \calC_6$. Thus $\calP^{-1}\left(Z\setminus (\calC_2\cup \calC_6)\right) \neq \emptyset$ and of expected dimension.
 \end{proof}

\begin{remark}\label{remindex2}
As always with cubic fourfolds (esp. those containing planes), the (potential) irrationality rest on a subtle index $2$ issue. Specifically, if instead of  $T_X=(D_4)^3\oplus U^2\cong D_4\oplus E_8\oplus U(2) \oplus U(2)$ we are considering  the index $2$ overlattice $D_4\oplus E_8\oplus U(2)\oplus U$, then there exists an associated $K3$ surface $S$ with Neron-Severi lattice $M=D_4\oplus U(2)$. A geometric realization for $S$ can be obtained via the double cover of $\bP^2$ branched in a quintic and a line. There is a close geometric relationship between $X$ (with $T_X$ as above) and $S$. Both cases arise from the study of certain singularities ($O_{16}$ and $N_{16}$ respectively; see \cite{raduthesis}). 
\end{remark}

\section{Cubic fourfolds with an Eckardt point}\label{sectlpz}
The only remaining issue is to provide a geometric meaning for cubic fourfolds $X$ with transcendental cohomology $T_X\cong (D_4)^3\oplus U^2$. The content of \cite{LPZ} is to analyze the moduli space of cubics $X$ with an Eckardt point $p$, which turn out to have transcendental lattice precisely $(D_4)^3\oplus U^2$. In view of Corollary \ref{pfitem3}, this concludes the proof of Theorem \ref{mainthm}.

The cubic fourfolds considered in \cite{LPZ} arise from multiple different perspectives, and have many special properties. Let us give a brief overview of the situation. First, motivated by the study of the singularity $O_{16}$ (see \cite{raduthesis}), we can consider cubic pairs $(Y,S)$ consisting of a cubic threefold and a hyperplane section $S$ of $Y$. By a construction analogous to \cite{ACT}, we can encode such a pair $(Y,S)$ into a cubic fourfold $X$ together with an involution $\iota$ fixing a hyperplane. Explicitly,  we can choose coordinates on $\bP^5$ such that 
\begin{equation}\label{eqlpz}
X=V(f_3(x_0,\dots,x_4)+x_0x_5^2),
\end{equation}
where $f_3$ is a general cubic form (with $Y=V(f_3)\subset \bP^4$,  and $S=V(f_3,x_0)$). The cubic $X$ is characterized by the fact that it carries an involution $\iota$ (explicitly $x_5\to -x_5)$ fixing a hyperplane (here $V(x_5)$) and a point $p$ (here $p=(0,\dots,0,1)$).  It turns out that $p$ is an Eckardt point for $X$ (i.e. $X\cap T_pX$ is the cone over the cubic surface $S$), and that this can be used to give a different geometric characterization of $X$. Finally, $X$ also occurs as a smooth birational model for a general degree $6$ weighted hypersurface in $W\bP(1,2,2,2,2,3)$ (which occurs in the analysis of the four-dimensional analogue of Reid's list of $95$ $K3$ surfaces that are weighted hypersurfaces; this was our original motivation for \cite{LPZ}). All these geometric characterizations are shown to be equivalent in \cite{LPZ}. The relevant bit here is the following:

\begin{theorem}[{\cite{LPZ}}]
For a general cubic fourfold $X$ as in \cite{LPZ}, the following hold:
\begin{itemize}
\item[i)] $X$ contains $27$ planes $\Pi_i$ passing through the Eckardt point $p$; \item[ii)] The primitive algebraic cohomology $N_0\cong E_6(2)$ (spanned by classes $[\Pi_i]-[\Pi_j]$);
\item[iii)] The transcendental cohomology of $X$ is $T\cong (D_4)^3\oplus U^2$. 
\end{itemize}
\end{theorem}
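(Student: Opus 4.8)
The theorem is attributed to \cite{LPZ}, so the plan is to extract the three assertions from the detailed geometric analysis there, using the explicit model \eqref{eqlpz}. The overall strategy is: first exhibit the $27$ planes geometrically and compute the lattice they span; then identify that lattice with $E_6(2)$; and finally obtain the transcendental lattice as an orthogonal complement, cross-checked against the lattice-theoretic classification already established in this paper (Theorem \ref{thm2elemcubic} and Theorem \ref{thmuniqe}).

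For item (i), I would start from the Eckardt point $p=(0,\dots,0,1)$ and the fact that $X\cap T_pX$ is the cone (with vertex $p$) over the cubic surface $S=V(f_3,x_0)\subset \bP^3$. A smooth cubic surface contains exactly $27$ lines; coning each line $\ell_i\subset S$ from $p$ produces a plane $\Pi_i\subset X$ passing through $p$. One must check these are genuinely $27$ \emph{distinct} planes lying on $X$ (not merely on the tangent hyperplane section), which follows because the cone over $S$ is contained in $X$ by the defining property of an Eckardt point. This reduces item (i) to the classical enumeration of the $27$ lines on a cubic surface.

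For item (ii), the plan is to compute the intersection pairing among the classes $[\Pi_i]$ inside $H^4(X,\bZ)_{\prim}$. Each $\Pi_i$ has self-intersection determined by the standard formula for a plane in a cubic fourfold, and the pairings $[\Pi_i]\cdot[\Pi_j]$ are governed by whether the corresponding lines on $S$ meet (giving the incidence graph of the $27$ lines). The differences $[\Pi_i]-[\Pi_j]$ are primitive, and their pairings reproduce—up to an overall factor of $2$—the intersection form on the root system associated to the $27$ lines, which is exactly $E_6$; this is the source of the $E_6(2)$ structure. I would therefore match the Gram matrix of the $[\Pi_i]-[\Pi_j]$ against that of $E_6(2)$, invoking the well-known identification of the configuration of the $27$ lines with the weights of the minuscule $E_6$-representation (equivalently, the Weyl group $W(E_6)$ acts as the symmetry group of the configuration). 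The main obstacle in this step is the careful bookkeeping of the index-$2$ scaling: one must verify that the lattice spanned is precisely $E_6(2)$ and is \emph{primitively} embedded as $N_0$, rather than a finite-index sub- or over-lattice. This is exactly the delicate index-$2$ phenomenon flagged in Remark \ref{remindex2}, and it is where the argument is most error-prone.

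Finally, for item (iii), once $N_0\cong E_6(2)$ is established as the primitive algebraic lattice, the transcendental lattice $T$ is its orthogonal complement in $\Lambda_0=A_2\oplus(E_8)^2\oplus U^2$. Rather than computing this complement directly, I would appeal to the uniqueness already proved: by Theorem \ref{thmuniqe}, the orthogonal complement of a primitively embedded $(D_4)^3\oplus U^2$ is $E_6(2)$, and conversely the genus and signature data pin down $T$ uniquely. Concretely, $T$ must be even of signature $(14,2)$ with $A_T\cong A_{E_6(2)}$ modulo the $\bZ/3$ glued off by $h$, forcing $A_T\cong(\bZ/2)^6$ with $\delta=0$; by Theorem \ref{thm2elementary} this $2$-elementary indefinite lattice is unique in its genus, hence $T\cong(D_4)^3\oplus U^2$. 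Thus item (iii) follows formally from items (i)–(ii) together with the lattice classification, and the real content of the theorem resides in the geometric computation of step (ii).
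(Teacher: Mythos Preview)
The paper does not give a proof of this theorem: it is stated as a result of \cite{LPZ} and simply quoted. The only argument the paper supplies is the sentence preceding the theorem, namely that for an Eckardt point $p$ one has $X\cap T_pX$ equal to the cone over the cubic surface $S$, which is exactly the geometric input you use for item (i). Your outline---cone the $27$ lines of $S$ from $p$ to get the $\Pi_i$; compute the Gram matrix of the $[\Pi_i]-[\Pi_j]$ via the incidence of the $27$ lines to recognize $E_6(2)$; then read off $T$ as the orthogonal complement using the $2$-elementary classification (Theorems \ref{thm2elementary}, \ref{thm2elemcubic}, \ref{thmuniqe})---is a faithful reconstruction of how such a proof must go and is consistent with everything the present paper says. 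There is nothing further to compare against.
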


\begin{corollary}\label{pfitem2}
A  general cubic fourfold $X$ containing an Eckardt point is maximally algebraic potentially irrational. Conversely, any maximally algebraic potentially irrational $X$ is a cubic with a single Eckardt point. \qed
\end{corollary}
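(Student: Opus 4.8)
The plan is to assemble the corollary from the results already in hand, treating the two directions separately; recall that ``maximally algebraic potentially irrational'' means potentially irrational with $\kappa_X=1$, since Corollary \ref{pfitem1} shows that $1$ is the largest value $\kappa_X$ can take under the irrationality constraint. For the forward direction I would start from the quoted theorem of \cite{LPZ}: a general cubic fourfold $X$ with an Eckardt point has $N_0\cong E_6(2)$ and $T_X\cong (D_4)^3\oplus U^2$. Potential irrationality is then immediate from Theorem \ref{thm2elemcubic}, which says exactly that $(D_4)^3\oplus U^2$ embeds into $A_2\oplus(E_8)^2\oplus U^2$ but not into the $K3$ lattice, so $X$ satisfies Definition \ref{defirrational}. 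For the index, since $T_X$ is $2$-elementary with $A_T\cong(\bZ/2)^6$ one reads off $\rho=\rank N_0=\rank E_6(2)=6$ and $d=|A_T|=2^6$, giving $\kappa_X=2^\rho/d=1$, which is maximal by Corollary \ref{pfitem1}.

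For the converse I would run the same chain backwards. If $X$ is potentially irrational with $\kappa_X=1$, Corollary \ref{pfitem1} forces $T_X$ to be $2$-elementary, and Theorem \ref{thm2elemcubic} then pins down $T_X\cong(D_4)^3\oplus U^2$ as the \emph{only} $2$-elementary transcendental lattice that fails to embed into the $K3$ lattice. By Theorem \ref{thmuniqe} the primitive embedding into $\Lambda_0$ (and its extension to $I_{21,2}$) is unique up to $O^*(\Lambda_0)$, so the transcendental lattice can be marked canonically; feeding this through the Torelli theorem and the period-map argument of Corollary \ref{pfitem3} shows that the cubics with $T_X\cong(D_4)^3\oplus U^2$ fill out a single irreducible $14$-dimensional locus $Z$, which by the geometric analysis of \cite{LPZ} is precisely the closure of the locus of cubics carrying an Eckardt point. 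The word \emph{single} I would settle using the normal form \eqref{eqlpz}: for general $f_3$ the Eckardt point is the isolated fixed point $p=(0,\dots,0,1)$ of the involution $\iota$, and a second Eckardt point is a further algebraic-cycle condition cutting out a proper closed subset of $Z$, hence avoided by the general member.

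I expect the only real obstacle to lie in the converse, and specifically in the step that converts the abstract lattice datum $T_X\cong(D_4)^3\oplus U^2$ into the concrete statement that $X$ carries an Eckardt point. The lattice identification itself is clean, but bridging it to geometry genuinely relies on the Torelli theorem for cubic fourfolds and on the marked-period description of the Eckardt locus supplied by \cite{LPZ}; one must also track the excluded divisors $\calC_2$ and $\calC_6$ (as in Corollary \ref{pfitem3}) to be sure the geometric locus is nonempty and of the expected dimension. Everything else is a matter of quoting Corollary \ref{pfitem1}, Theorems \ref{thm2elemcubic} and \ref{thmuniqe}, and the $\kappa$ computation above.
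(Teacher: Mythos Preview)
Your proposal is correct and matches the paper's approach: the paper in fact gives no argument at all for this corollary (it ends with a bare \qed), treating it as an immediate consequence of the \cite{LPZ} theorem just stated together with Corollaries \ref{pfitem1} and \ref{pfitem3} and Theorems \ref{thm2elemcubic}, \ref{thmuniqe}. You have simply spelled out the chain of implications the paper leaves implicit, including the ``single Eckardt point'' clause via genericity in the normal form \eqref{eqlpz}.
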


 \bibliography{maxcubic}
\end{document}